\newcounter{alphthm}
\newtheorem{thm}{Theorem}[section]
\newtheorem{lem}[thm]{Lemma}
\newtheorem{cor}{Corollary}[section]
\newtheorem{con}{Conjecture}[section]
\theoremstyle{definition}
\newcommand{\be}{\begin{equation}}
\newcommand{\ee}{\end{equation}}
\newcommand{\pa}{{\partial}}
\newcommand{\g}{{\bf g}}
\title{On Homogeneous Landsberg  Surfaces}
\author{A. Tayebi and B. Najafi }
\numberwithin{equation}{section}
\begin{document}
\maketitle

\begin{abstract}
In this paper, we prove that every homogeneous Landsberg surface has isotropic  flag curvature. Using this special form of the flag curvature,   we prove a rigidity result on homogeneous Landsberg surface. Indeed, we prove  that every homogeneous Landsberg surface is Riemannian or locally Minkowskian. This gives a positive answer to the Xu-Deng's well-known conjecture in 2-dimensional homogeneous Finsler manifolds.\\\\
{\bf {Keywords}}: Homogeneous Finsler surface, Landsberg metric, Berwald metric, flag curvature.\footnote{ 2000 Mathematics subject Classification: 53B40, 53C60.}
\end{abstract}
\section{Introduction}
Let  $(M, F)$ be  a Finsler manifold and $c: [a, b]\rightarrow M$ be a piecewise $C^\infty$ curve from $c(a)=p$ to $c(b)=q$. For every $u\in T_pM$, let us define $P_c:T_pM\rightarrow T_qM$ by $P_c(u):=U(b)$, where $U=U(t)$ is the parallel vector field along $c$ such that
$U(a)=u$. $P_c$ is called the parallel translation along $c$. In \cite{I},  Ichijy\={o} showed that if  $F$ is a Berwald metric, then  all tangent
spaces $(T_xM, F_x)$ are linearly isometric to each other. Let us consider the Riemannian metric ${\hat g}_x$ on $T_xM_0:=T_xM-\{0\}$ which is defined by
${\hat g}_x:=g_{ij}(x, y)\delta y^i\otimes \delta y^j$, where $g_{ij}:={1}/{2}[F^2]_{y^iy^j}$ is the fundamental tensor of $F$ and $\{\delta y^i:= dy^i+N^i_j dx^j\}$ is the natural coframe on $T_xM$ associated with the natural basis $\{{\partial}/{\partial x^i}|_x\}$ for $T_xM$. If $F$ is a Landsberg metric, then for any $C^\infty$ curve $c$,  $P_c$  preserves the induced Riemannian metrics on the tangent spaces, i.e., $P_c:(T_pM, {\hat g}_p)\rightarrow (T_qM, {\hat g}_q)$ is an isometry. By definition, every Berwald metric is a Landsberg metric, but the converse may not hold.

In 1996, Matsumoto found a list of rigidity results which almost suggest that such a pure  Landsberg metric (non-Berwaldian metric) does not exist \cite{Mat96}. In 2003, Matsumoto emphasized this problem again and looked at it as the most important open problem in Finsler geometry.
It is a long-existing open problem in Finsler geometry to find Landsberg metrics which are not Berwaldian. Bao called such metrics unicorns in
Finsler geometry, mythical single-horned horse-like creatures that exist in legend but have never been seen by human beings \cite{Bao2}. There are a lot of unsuccessful attempts
to find explicit examples of  unicorns.   In \cite{Szabo2}, Szab\'{o} made an argument to prove that any regular Landsberg metric must be of Berwald type. But unfortunately, there is a little gap in Szab\'{o}'s argument. As pointed out in Szab\'{o}'s correction to \cite{Szabo2}, his argument only applies to the so-called dual Landsberg spaces.  Hence, the unicorn problem remains open in Finsler geometry. Taking into account of so many unsuccessful efforts of
 many researchers, one can conclude  that unicorn problem is becoming more and more puzzling.

\newpage

The unicorn problem  in Finsler geometry is well-studied. However, up to now, very little attention has been paid to the subject of homogeneous Finsler metrics. A Finsler manifold $(M, F)$ is said to be homogeneous if  its group of isometries  acts transitively on $M$. In \cite{TN1}, the authors consider the unicorn problem in the class of  homogeneous $(\alpha, \beta)$-metric. We proved that every homogeneous $(\alpha, \beta)$-metric is a stretch metric if and only if it is a  Berwald metric.  In \cite{XD}, Xu-Deng  introduced a generalization of  $(\alpha,\beta)$-metrics,
 called $(\alpha_1,\alpha_2)$-metrics. Let $(M, \alpha)$ be an $n$-dimensional Riemannian manifold. Then one can define an $\alpha$-orthogonal decomposition of the tangent bundle  by $TM=\mathcal{V}_1\oplus\mathcal{V}_2$,
where $\mathcal{V}_1$ and $\mathcal{V}_2$ are two   linear subbundles with dimensions $n_1$ and $n_2$ respectively, and
$\alpha_i=\alpha|_{\mathcal{V}_i}$ $i=1,2$  are naturally viewed as  functions on $TM$.
An  $(\alpha_1,\alpha_2)$-metric on $M$ is a  Finsler metric $F$ which can be written as
$F=\sqrt{L(\alpha_1^2,\alpha_2^2)}$. An $(\alpha_1,\alpha_2)$-metric can also be represented as
$F=\alpha\phi(\alpha_2/\alpha)=\alpha\psi(\alpha_1/\alpha)$, in which
$\phi(s)=\psi(\sqrt{1-s^2})$. They proved that  evey Landsberg $(\alpha_1,\alpha_2)$-metric reduces to a Berwald metric. This result shows that the finding  a unicorn cannot be successful even in the very broad class of $(\alpha_1,\alpha_2)$-metrics. Then, Xu-Deng conjectured the following:
\begin{con}{\rm (\cite{XD})}
A  homogeneous Landsberg space must be a Berwald space.
\end{con}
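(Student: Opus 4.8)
The plan is to combine the structure theory of homogeneous Finsler spaces with the defining feature of Landsberg metrics recalled in the introduction — that parallel translation is a $\hat{g}$-isometry between the slit tangent spaces. Write $M = G/H$, where $G$ is the identity component of the full isometry group and $H$ the isotropy subgroup at a base point $o$; then $F$ restricts on $T_oM \cong \mathfrak{m}$ to an $\mathrm{Ad}(H)$-invariant Minkowski norm, and every $G$-invariant Finsler tensor is determined by its value at $o$ together with $\mathrm{Ad}(H)$-invariance. Since the Cartan tensor $C$, the Landsberg tensor $L$, the Berwald curvature $G^i_{jkl}$ and their mean contractions are all natural $G$-invariant objects, it suffices to establish $G^i_{jkl}(o, y) = 0$ for every $y \in T_oM_0$, i.e. to reduce the conjecture to a single fibre $\mathfrak{m}$.

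Next I would phrase the two competing conditions in terms of the Cartan tensor. Being Landsberg is equivalent to $C$ being parallel along every geodesic (its horizontal covariant derivative in the flow direction, namely $L$, vanishes), whereas being Berwald is the stronger requirement that the full Berwald curvature $G^i_{jkl}$ vanish, equivalently that $C$ be horizontally parallel in all directions and not merely along geodesics. The Landsberg hypothesis already forces the holonomy group $\mathrm{Hol}_o$ to act on $(T_oM_0, \hat{g}_o)$ by isometries fixing $C_o$; the entire content of the conjecture is to upgrade this to genuine horizontal parallelism of $C$, using that $G$ is transitive.

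The third step is to exploit homogeneous geodesics. Along any geodesic $\gamma$ the Landsberg property makes $t \mapsto C_{\gamma(t)}$ a family parallel under the $\hat{g}$-isometric identifications given by parallel transport, so $\|C\|^2_{\hat{g}}$ and every scalar invariant built from $C$ and the curvature is constant along $\gamma$; transitivity of $G$ then makes these invariants constant on all of $M$. I would push this into the variational picture: the Landsberg and Berwald tensors enter the second variation and the Jacobi equation, and I expect that constancy of the curvature invariants, combined with the $\mathrm{Ad}(H)$-invariant Minkowskian structure and the structure constants of $\mathfrak{g}$, pins the spray coefficients $G^i$ down to being quadratic in $y$ — which is exactly Berwaldness.

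The hard part will be the final implication $L = 0 \Rightarrow G^i_{jkl} = 0$ in \emph{arbitrary} dimension: this is the very core of the unicorn problem, and homogeneity does not obviously close it. Unlike the surface case treated in this paper, the flag curvature in higher dimension is no longer a scalar, so the isotropic-flag-curvature route that drives the two-dimensional rigidity has no direct analogue. I expect the decisive missing input to be an algebraic rigidity statement on the single fibre $\mathfrak{m}$: that an $\mathrm{Ad}(H)$-invariant Minkowski norm with vanishing Landsberg tensor must have $\mathrm{Ad}(H)$-parallel Cartan tensor, hence be Berwaldian. Making this algebraic step unconditional — rather than assuming scalar, isotropic, or constant flag curvature — is the principal obstacle, and it is precisely why the conjecture remains open beyond the homogeneous surfaces resolved here.
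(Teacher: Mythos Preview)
The statement you were asked to prove is not a theorem of the paper but Xu--Deng's open conjecture, cited as motivation and left unproved in general. The paper's actual contribution is the two-dimensional case (Theorem~\ref{MainTHM1}), obtained by a route entirely different from yours: exploiting the Berwald frame and the special form of the Douglas tensor on a surface, the authors show $\mathbf{H}=0$, deduce isotropic flag curvature (Theorem~\ref{MainTHM2}) via Akbar-Zadeh, and then use completeness and boundedness of the Cartan scalar to force the metric to be Riemannian or locally Minkowskian. None of this generalises past dimension two, and the paper makes no claim that it does.

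Your proposal is candid but is not a proof. The first three steps --- writing $M=G/H$, recasting the Landsberg condition as $\hat g$-isometric holonomy fixing $C_o$, and observing that scalar invariants of $C$ are constant on $M$ --- are correct and standard, but they do not constrain the Berwald curvature: constancy of $\|C\|_{\hat g}$ and similar invariants is far weaker than horizontal parallelism of $C$. The step where the argument must bite is your ``algebraic rigidity statement on the single fibre $\mathfrak m$'', namely that an $\mathrm{Ad}(H)$-invariant Minkowski norm with $L=0$ has $\mathrm{Ad}(H)$-parallel Cartan tensor. You offer no mechanism for this beyond the heuristic that curvature invariants and structure constants should ``pin $G^i$ down to being quadratic in $y$''; no such implication is known, and indeed you explicitly concede in your final paragraph that this is exactly the open core of the unicorn problem. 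So the gap is real and located precisely where you place it: there is no argument connecting the holonomy-isometry picture to vanishing of $B^i_{\ jkl}$, and until that link is supplied the proposal remains a restatement of the conjecture rather than progress toward it.
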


Taking a look at the rigid theorems in Finsler geometry, one can find that this type of result is different for procedures with dimensions greater than three. For example, in \cite{Sz} Szab\'{o} proved that any connected Berwald surface  is  locally Minkowskian or Riemannian. In  \cite{BCS}, Bao-Chern-Shen  proved a rigidity result for compact Landsberg surface. They showed that a compact Landsberg surfaces with non-positive flag curvature is  locally Minkowskian or Riemannian.  Therefore, we preferred to consider the issue of unicorns for homogeneous Finsler surfaces. In this paper, we prove the following rigidity result.
\begin{thm}\label{MainTHM1}
Any homogeneous Landsberg surface of is Riemannian or locally Minkowskian.
\end{thm}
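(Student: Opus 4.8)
The plan is to exploit the two-dimensional structure via the Berwald frame and the theory of the main scalar $I$. In dimension two, a non-Riemannian Finsler surface is governed by the main scalar $I=I(x,y)$, and the Landsberg condition is equivalent to the vanishing of $I_{;0}$ (the horizontal derivative of $I$ along the geodesic spray), i.e. $I_{2}=0$ in Berwald-frame notation, so that $I$ is constant along Finslerian geodesics. I would first record the Berwald-frame structure equations for a Landsberg surface: with the orthonormal frame $\{e_1,e_2\}$ where $e_1=\ell$ is the distinguished direction, one has the Cartan tensor encoded by $I$, the Landsberg condition forces $e_2(I)=0$ horizontally, and the flag curvature $K$ enters the last structure equation together with $I_{;1;1}$ and $I^2 K$ in the classical relation of Bao--Chern--Shen.

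Second, I would bring in homogeneity. Since the isometry group acts transitively on $M$, and isometries preserve $F$, hence the Cartan tensor and all its covariant derivatives, the quantities built invariantly from $I$ and $K$ — in particular $\sup_{y} |I(x,y)|$ and the behaviour of $K$ — are independent of $x$. Combined with the Landsberg property (parallel translation is an isometry of the $\hat g_x$, as recalled in the introduction), this should yield that $I$ is uniformly bounded on the whole indicatrix bundle. The first substantive claim to establish, which the excerpt announces as a lemma proved earlier in the paper, is that a homogeneous Landsberg surface has isotropic flag curvature, $K=K(x)$; homogeneity then upgrades this to $K=\text{const}$. I would prove isotropy by showing, using the structure equations plus the Landsberg identity $I_2=0$ and its prolongations, that the non-isotropic part of $K$ is a function that must be annihilated by the full isometry algebra and simultaneously satisfies a first-order ODE along geodesics with bounded coefficients, forcing it to vanish.

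Third, with $K$ constant I would run a Jacobi-field / completeness argument: a homogeneous Finsler manifold is complete, so if $K=\text{const}=\kappa\neq 0$ one derives a contradiction with the Landsberg condition via the structure equation $I_{;1;1}+\kappa I + (\text{sign terms})=0$ — integrating along a unit-speed geodesic, $J(t):=I(\gamma(t),\dot\gamma(t))$ would satisfy $J''+\kappa J=0$ while $J$ stays bounded (homogeneity) and, for $\kappa>0$, periodicity plus the Landsberg-forced rigidity of the indicatrix forces $I\equiv 0$, i.e. Riemannian; for $\kappa<0$ boundedness of $J$ along a complete geodesic already forces $J\equiv 0$. Hence $K\equiv 0$. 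Then I would invoke the final step: a Landsberg surface with $K\equiv 0$ has, by the structure equations, $I_{;1;1}=0$ along geodesics, and boundedness of $I$ (homogeneity again) makes $I_{;1}=0$, so $I$ is a horizontal constant; a Landsberg metric all of whose Cartan-type scalar is horizontally parallel is a Berwald metric, and by Szab\'o's theorem on Berwald surfaces $(M,F)$ is Riemannian or locally Minkowskian.

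The main obstacle I anticipate is the isotropy step — passing from the pointwise structure equations to genuine $y$-independence of $K$. In two dimensions $K$ is a priori a function on the slit tangent bundle, and the Landsberg condition alone does not kill its angular dependence; the leverage must come from combining the prolonged Landsberg identities (derivatives of $I_2=0$) with the Bianchi-type identity for $K$ in the Berwald frame, and then feeding in homogeneity to remove the $x$-dependence of the resulting invariants. Making this rigorous without circularity — i.e. without secretly assuming what one wants to prove about $I$ — is the delicate part; once isotropy of $K$ is in hand, the completeness/ODE arguments and the appeal to Szab\'o's surface rigidity are comparatively routine.
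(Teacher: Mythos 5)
Your outline reproduces the paper's high-level structure (first show the flag curvature is isotropic, then use completeness and bounded invariants to force Riemannian or flat), but the step that carries all the weight --- isotropy of ${\bf K}$ --- is not actually argued. Homogeneity only transports fiberwise data from point to point; it does not remove the angular dependence of ${\bf K}(x,y)$, and the Landsberg identity and "its prolongations" are not specified in any way that would do so. The paper's mechanism here is concrete and quite different from your sketch: in dimension two the Douglas tensor is proportional to the flagpole, ${\bf D}_y(u,v,w)={\bf T}(u,v,w)\,y$; differentiating this horizontally along geodesics, comparing with the decomposition of ${\bf D}$ in terms of ${\bf B}$ and ${\bf E}$, and using ${\bf L}=0$, one gets $D_0{\bf H}=0$, hence along any geodesic with parallel $U$ the function ${\bf E}(t)={\bf E}_{\dot c}(U,U)$ is affine, ${\bf E}(t)={\bf H}(0)t+{\bf E}(0)$; completeness of homogeneous metrics and boundedness of invariant tensors then force ${\bf H}\equiv 0$, and Akbar-Zadeh's criterion (scalar flag curvature plus ${\bf H}=0$ gives ${\bf K}={\bf K}(x)$) yields isotropy. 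Nothing equivalent to this chain appears in your proposal, and you yourself flag it as the delicate part; as written, the proof has a genuine gap precisely there.

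There are also two problems in your endgame. For $\kappa>0$ the phrase "periodicity plus the Landsberg-forced rigidity of the indicatrix" is not an argument; note that once ${\bf K}$ is constant the identity you invoke, together with the Landsberg condition (the main scalar is constant along geodesics, so $J'\equiv 0$), immediately gives $\kappa J=0$, handling both signs at once without any boundedness discussion. More seriously, in the flat case your claim that $I_{;1;1}=0$ plus boundedness yields $I_{;1}=0$ and hence Berwald is circular: the Landsberg condition already kills the horizontal derivative of $I$ along the geodesic direction, so this produces no control on the other horizontal derivative $I_{|2}$, which is exactly what Berwald requires beyond Landsberg --- you would in effect be assuming the unicorn statement in the flat case. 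The paper avoids this entirely: having reached ${\bf K}\equiv 0$, it applies Akbar-Zadeh's global theorem that a complete Finsler metric of vanishing flag curvature with bounded first and second Cartan torsions is locally Minkowskian, the completeness and boundedness being supplied by homogeneity; no passage through Berwald metrics or Szab\'o's surface theorem is needed.
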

This result articulates  the hunters of unicorns that they do not looking forward to seeing such a creature in the jungle of homogeneous Finsler surfaces.
\bigskip

In order to prove Theorem \ref{MainTHM1}, we consider the  flag curvature of Landsberg surface and prove  the following rigidity result.
\begin{thm}\label{MainTHM2}
Every homogeneous Landsberg surface has isotropic  flag curvature.
\end{thm}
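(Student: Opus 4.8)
The plan is to run the argument inside the classical theory of Finsler surfaces. On the indicatrix bundle $SM$ of $(M,F)$ one has the Berwald frame and its dual coframe, and the whole geometry is encoded by three scalar functions on $SM$: the main scalar $\mathbf{I}$, the Landsberg scalar $\mathbf{J}$, and the flag curvature $\mathbf{K}$. Recall that $F$ is Landsberg exactly when $\mathbf{J}\equiv 0$, that $F$ is Riemannian exactly when $\mathbf{I}\equiv 0$ (Deicke), and that $\mathbf{J}$ measures the rate of change of $\mathbf{I}$ along geodesics. I would begin by writing down the structure equations of a Finsler surface together with the resulting Bianchi identities relating $\mathbf{I}$, $\mathbf{J}$, $\mathbf{K}$ and their horizontal and vertical derivatives, and then specializing to the Landsberg case $\mathbf{J}\equiv 0$, so that every derivative of $\mathbf{J}$ also vanishes. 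Two facts then drop out: first, $\mathbf{I}$ is constant along every geodesic, i.e. it is a first integral of the geodesic flow on $SM$; second, along each fibre $S_xM$ the vertical derivative of $\mathbf{K}$ satisfies a relation of the shape $\dot{\mathbf{K}}+\mathbf{I}\,\mathbf{K}=0$ (up to a fixed numerical constant). Integrating this relation around the circle $S_xM$ shows that on each fibre $\mathbf{K}$ is either identically zero or nowhere zero, and since $SM$ is connected, globally either $\mathbf{K}\equiv 0$, or $\mathbf{K}>0$, or $\mathbf{K}<0$. In the first case the flag curvature is already isotropic (indeed zero), so we may assume $\mathbf{K}$ has constant nonzero sign.

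Now homogeneity enters. Since the isometry group $G$ acts transitively and $\mathbf{I}$, $\mathbf{K}$ are $G$-invariant, while each fibre $S_xM$ is a compact circle, $\mathbf{K}$ is bounded away from zero on $SM$: $|\mathbf{K}|\ge \delta>0$. If $\mathbf{K}>0$, this is a uniform positive lower bound, so the Finslerian Bonnet--Myers theorem forces $M$ to be compact; a compact homogeneous surface is $S^2$, $\mathbb{RP}^2$, $T^2$ or the Klein bottle, and on the last two a homogeneous Finsler metric is translation invariant, hence flat, so positive curvature leaves only $S^2$ or $\mathbb{RP}^2$. Passing to the universal cover $S^2$, which is again homogeneous, the isotropy subgroup acts transitively on the unit tangent circle, so every tangent norm is Euclidean and $F$ is Riemannian; hence $\mathbf{K}=\mathbf{K}(x)$ is isotropic. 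If $\mathbf{K}<0$, then $M$ cannot be a compact homogeneous surface (by the same dichotomy, none carries a homogeneous Finsler metric of everywhere-negative flag curvature), so $M$ is noncompact; passing to the universal cover, $M$ is diffeomorphic to $\mathbb{R}^2$ and is a connected Lie group of dimension $2$ or $3$ equipped with a left-invariant Finsler metric, the isotropy being a compact --- hence at most one-dimensional --- subgroup of $GL_2(\mathbb{R})$. If the isotropy is one-dimensional it again acts transitively on directions, so $F$ is Riemannian and $\mathbf{K}$ is the constant curvature of $\mathbb{H}^2$. Otherwise $M$ is a simply transitive two-dimensional Lie group; the abelian case yields a translation-invariant, hence flat, metric, contradicting $\mathbf{K}<0$, so $M$ must be the non-abelian solvable ($ax+b$) group.

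The remaining point --- and, I expect, the main obstacle --- is to show that a left-invariant Landsberg Finsler metric on the two-dimensional non-abelian solvable Lie group is necessarily Riemannian. I would attack this directly: write the metric as a Minkowski norm $\phi$ on the Lie algebra, with $[\mathbf{e}_1,\mathbf{e}_2]=\mathbf{e}_2$, compute the spray coefficients of the associated left-invariant spray, hence the Berwald and Landsberg tensors, explicitly in terms of $\phi$ and the single structure constant, and show that the condition $\mathbf{J}\equiv 0$ is an overdetermined system for $\phi$ whose only solutions are quadratic norms. Equivalently --- this is in effect the Xu--Deng conjecture for this model --- one shows that $\mathbf{J}\equiv 0$ forces the metric to be Berwald, and then Szab\'{o}'s theorem that a connected Berwald surface is locally Minkowskian or Riemannian, together with $\mathbf{K}<0$, gives that it is Riemannian. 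In every case $\mathbf{K}$ depends only on the point, which is the assertion of Theorem \ref{MainTHM2}. Apart from this final computation, the steps I would take most care to make rigorous are the two reductions through covering spaces: that a homogeneous Finsler $2$-sphere is Riemannian, and that the universal cover of a noncompact homogeneous negatively curved surface is a Lie group of the stated form.
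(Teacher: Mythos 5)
Your reduction has a genuine hole at exactly the point you yourself flag as ``the main obstacle''. After the sign trichotomy, the case $\mathbf{K}<0$ is funneled into the claim that a left-invariant Landsberg metric on the two-dimensional non-abelian solvable Lie group is Riemannian (equivalently Berwald). You do not prove this; you only announce an intended computation (``write the spray coefficients, show $\mathbf{J}\equiv 0$ is an overdetermined system whose only solutions are quadratic norms'') and even remark that it ``is in effect the Xu--Deng conjecture for this model''. But that is precisely the kind of statement the theorem is supposed to establish: for this homogeneous model the Landsberg-implies-Berwald question is the whole difficulty, and nothing in your outline shows the overdetermined system actually forces a quadratic norm. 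Until that computation is carried out (or replaced by an argument), the negative-curvature branch --- which is the only branch where isotropy of $\mathbf{K}$ is genuinely at stake, since $\mathbf{K}\equiv 0$ and the Riemannian cases are automatic --- is unproved, so the proposal does not yet prove Theorem \ref{MainTHM2}. (The positive-curvature branch also leans on several unproved classification facts, e.g.\ that a homogeneous Finsler metric on $T^2$ or the Klein bottle is translation invariant and that the isotropy on $S^2$ rotates the tangent circle, but these are at least standard and repairable; the solvable-group case is not.)

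It is worth contrasting this with the route the paper takes, which avoids your hard case entirely: using the surface form of the Douglas tensor ${\bf D}_y(u,v,w)={\bf T}(u,v,w)\,y$ and the Landsberg condition, one shows that ${\bf H}$ is parallel along geodesics, so ${\bf E}(t)={\bf H}(0)t+{\bf E}(0)$ along any unit-speed geodesic; completeness of homogeneous Finsler manifolds plus boundedness of invariant tensors then forces ${\bf H}=0$, and Akbar-Zadeh's theorem (scalar flag curvature together with ${\bf H}=0$ gives ${\bf K}={\bf K}(x)$, and every surface has scalar flag curvature) yields isotropy directly, with no case split on the sign of $\mathbf{K}$ and no classification of homogeneous surfaces. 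If you want to salvage your approach, the minimum missing ingredient is a complete proof that $\mathbf{J}\equiv 0$ on the $ax+b$ group with a left-invariant metric implies the norm is Euclidean; as written, that step is assumed rather than demonstrated.
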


\section{Preliminaries}\label{sectionP}
Let $(M, F)$ be an $n$-dimensional Finsler manifold, and $TM$ be its tangent space. We denote the slit tangent space of $M$ by $TM_0$, i.e., $T_xM_0=T_xM-\{0\}$ at every $x\in M$.  The fundamental tensor $\textbf{g}_y:T_xM\times
T_xM\rightarrow \mathbb{R}$ of $F$ is defined by following
\[
\textbf{g}_{y}(u,v):={1 \over 2}\frac{\pa ^2}{\pa s \pa t} \Big[ F^2 (y+su+tv)\Big]|_{s,t=0}, \ \
u,v\in T_xM.
\]
Let $x\in M$ and $F_x:=F|_{T_xM}$. To measure the
non-Euclidean feature of $F_x$, define ${\bf C}_y:T_xM\times T_xM\times
T_xM\rightarrow \mathbb{R}$ by
\[
{\bf C}_{y}(u,v,w):={1 \over 2} \frac{d}{dt}\Big[\textbf{g}_{y+tw}(u,v)
\Big]|_{t=0}, \ \ u,v,w\in T_xM.
\]
The family ${\bf C}:=\{{\bf C}_y\}_{y\in TM_0}$ is called the Cartan torsion. By definition, ${\bf C}_y$ is a symmetric trilinear form on $T_xM$. It is well known that ${\bf{C}}=0$ if and only if $F$ is Riemannian.

Let $(M, F)$ be a Finsler manifold. For  $y\in T_x M_0$, define ${\bf I}_y:T_xM\rightarrow \mathbb{R}$
by
\[
{\bf I}_y(u)=\sum^n_{i=1}g^{ij}(y) {\bf C}_y(u, \partial_i, \partial_j),
\]
where $\{\partial_i\}$ is a basis for $T_xM$ at $x\in M$. The family
${\bf I}:=\{{\bf I}_y\}_{y\in TM_0}$ is called the mean Cartan torsion. By definition, ${\bf I}_y(u):=I_i(y)u^i$, where $I_i:=g^{jk}C_{ijk}$. By Deicke's theorem, every positive-definite Finsler metric
 $F$ is Riemannian if and only if ${\bf I}=0$.

\bigskip

Given a Finsler manifold $(M, F)$, then a global vector field ${\bf G}$ is induced by $F$ on $TM_0$, and in a standard coordinate $(x^i,y^i)$ for $TM_0$ is given by ${\bf G}=y^i {{\partial} / {\partial x^i}}-2G^i(x,y){{\partial}/ {\partial y^i}}$, where $G^i=G^i(x, y)$ are scalar functions on $TM_0$ given by
\[
G^i:=\frac{1}{4}g^{ij}\Bigg\{\frac{\partial^2[F^2]}{\partial x^k
\partial y^j}y^k-\frac{\partial[F^2]}{\partial x^j}\Bigg\},\ \
y\in T_xM.\label{G}
\]
The vector field ${\bf G}$ is called the spray associated with $(M, F)$.

\bigskip

For $y \in T_xM_0$, define ${\bf B}_y:T_xM\times T_xM \times T_xM\rightarrow T_xM$ by ${\bf B}_y(u, v, w):=B^i_{\ jkl}(y)u^jv^kw^l{{\partial } \over {\partial x^i}}|_x$ where
\[
B^i_{\ jkl}:={{\partial^3 G^i} \over {\partial y^j \partial y^k \partial y^l}}.
\]
The quantity $\bf B$ is called the Berwald curvature of the Finsler metric $F$. We call a Finsler metric $F$ a Berwald metric,  if ${\bf{B}}=0$. 

\bigskip
Define the mean of Berwald curvature by ${\bf E}_y:T_xM\times T_xM \rightarrow \mathbb{R}$, where
\[
{\bf E}_y (u, v) := {1\over 2} \sum_{i=1}^n g^{ij}(y) g_y \Big ( {\bf B}_y (u, v, e_i ) , e_j \Big ).
\]
The family ${\bf E}=\{ {\bf E}_y \}_{y\in TM\setminus\{0\}}$ is called the {\it mean Berwald curvature} or {\it E-curvature}.
In a local coordinates,  ${\bf E}_y(u, v):=E_{ij}(y)u^iv^j$, where
\[
E_{ij}:=\frac{1}{2}B^m_{\ mij}.
\]
The quantity  $\bf E$ is called the mean
Berwald curvature. $F$ is called a weakly Berwald metric if ${\bf{E}}=0$.  Also, define  ${\bf H}_y:T_xM\otimes T_xM \rightarrow \mathbb{R}$ by ${\bf H}_y(u,v):=H_{ij}(y)u^iv^j$, where
\[
H_{ij}:= E_{ij|s} y^s.
\]
Then  ${\bf H}_y$ is defined as the covariant derivative of ${\bf E}$ along geodesics.

\bigskip
For non-zero vector $y \in T_xM_0$, define   ${\bf D}_y:T_xM\otimes T_xM \otimes T_xM\rightarrow T_xM$
 by  ${\bf D}_y(u,v,w):=D^i_{\ jkl}(y)u^iv^jw^k\frac{\partial}{\partial x^i}|_{x}$, where
\[
D^i_{\ jkl}:=\frac{\partial^3}{\partial y^j\partial y^k\partial y^l}\Bigg[G^i-\frac{2}{n+1}\frac{\partial G^m}{\partial y^m} y^i\Bigg].\label{Douglas1}
\]
$\bf D$ is called the Douglas curvature.  $F$ is called a Douglas metric if $\bf{D}=0$.  According to the definition, the Douglas tensor can be written as follows
\[
D^i_{\ jkl}=B^i_{\ jkl}-\frac{2}{n+1}\Big\{E_{jk}\delta^i_{\ l}+E_{kl}\delta^i_{\ j}+E_{lj}\delta^i_{\ k}+E_{jk,l}y^i\Big\}.
\]

\bigskip

For $y\in T_xM$, define the Landsberg curvature ${\bf L}_y:T_xM\times T_xM \times T_xM\rightarrow \mathbb{R}$ by
\[
{\bf L}_y(u, v,w):=-\frac{1}{2}{\bf g}_y\big({\bf B}_y(u, v, w), y\big).
\]
$F$ is called a Landsberg metric if ${\bf L}_y=0$. By definition, every Berwald metric is a Landsberg metric.

\bigskip
Let $(M, F)$ be a Finsler manifold. For  $y\in T_x M_0$, define ${\bf J}_y:T_xM\rightarrow \mathbb{R}$
by
\[
{\bf J}_y(u)=\sum^n_{i=1}g^{ij}(y) {\bf L}_y(u, \partial_i, \partial_j).
\]
The quntity $\bf J$ is called the mean Landsberg curvature or J-curvature of Finsler metric $F$.
A Finsler metric $F$ is called a weakly Landsberg metric if ${\bf J}_y=0$. By definition, every Landsberg metric is a weakly Landsberg metric.
Mean Landsberg curvature can also be defined as following
\[
J_i: = y^m {\pa I_i \over \pa x^m} -I_m {\pa G^m\over \pa y^i} - 2 G^m {\pa I_i \over \pa y^m}.
\]
By definition, we get
\begin{eqnarray*}
{\bf J}_y (u):= {d\over dt} \Big [ {\bf I}_{\dot{\sigma}(t) } \big ( U(t) \big )\Big ]_{t=0},
\end{eqnarray*}
where $y\in T_xM$, $\sigma=\sigma(t)$ is the geodesic with $\sigma(0)=x$, $\dot{\sigma}(0)=y$, and $U(t)$ is a  linearly parallel vector field along $\sigma$ with
$U(0)=u$. The mean Landsberg curvature ${\bf J}_y$ is the rate of change of ${\bf I}_y$ along geodesics
for any $y\in T_xM_0$.

\bigskip
For an arbitrary  non-zero vector $y \in T_xM_{0}$, the Riemann curvature is a  linear
transformation $\textbf{R}_y: T_xM \rightarrow T_xM$ with homogeneity ${\bf R}_{\lambda y}=\lambda^2 {\bf R}_y$,
$\forall \lambda>0$, which is defined by
$\textbf{R}_y(u):=R^i_{k}(y)u^k {\partial / {\partial x^i}}$, where
\be
R^i_{k}(y)=2{\partial G^i \over {\partial x^k}}-{\partial^2 G^i \over
{{\partial x^j}{\partial y^k}}}y^j+2G^j{\partial^2 G^i \over
{{\partial y^j}{\partial y^k}}}-{\partial G^i \over {\partial
y^j}}{\partial G^j \over {\partial y^k}}.\label{Riemannx}
\ee
The family $\textbf{R}:=\{\textbf{R}_y\}_{y\in TM_0}$ is called the Riemann curvature of the Finsler manifold $(M, F)$.

\bigskip

For a flag $P:={\rm span}\{y, u\} \subset T_xM$ with flagpole $y$, the flag curvature ${\bf
K}={\bf K}(x, y, P)$ is defined by
\be
{\bf K}(x, y, P):= {\g_y \big(u, {\bf R}_y(u)\big) \over \g_y(y, y) \g_y(u,u)
-\g_y(y, u)^2 }.\label{FC0}
\ee
The flag curvature ${\bf K}(x, y, P)$ is a function of tangent planes $P={\rm span}\{ y, v\}\subset T_xM$. This quantity tells us how curved space is at a point. A Finsler metric $F$ is of scalar flag curvature if $\textbf{K} = \textbf{K}(x, y, P)$ is independent of flags $P$ containing $y\in T_xM_0$.

\section{Proof of Theorems}
In  this section, we are going to prove Theorems \ref{MainTHM1} and \ref{MainTHM2}. In order to prove Theorem \ref{MainTHM1},  first we consider the flag curvature of homogeneous Landsberg surface. More precisely, we prove Theorem \ref{MainTHM2}.  For this aim, we need some useful Lemmas as follows.

In \cite{LR}, Latifi-Razavi proved that every  homogeneous Finsler manifold is forward geodesically complete. In \cite{TN1}, Tayebi-Najafi improved their result and proved the following.
\begin{lem}{\rm (\cite{TN2})}\label{lem1}
Every homogeneous Finsler manifold is complete.
\end{lem}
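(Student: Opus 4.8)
The plan is to upgrade the forward geodesic completeness of Latifi--Razavi to full completeness. The key point is that homogeneity yields a \emph{uniform} positive lower bound on the interval of existence of unit-speed geodesics; once this is available, a standard continuation argument gives geodesic completeness in both the forward and backward directions, and the Finslerian Hopf--Rinow theorem then upgrades this to metric completeness.

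First I would fix a base point $p\in M$ and note that the indicatrix $S_p:=\{y\in T_pM:\ F(p,y)=1\}$ is compact, since $F(p,\cdot)$ is continuous, positively homogeneous, and positive away from the origin. The geodesic spray ${\bf G}$ of $(M,F)$ is a smooth vector field on $TM_0$, so by the existence, uniqueness and continuous-dependence theorems for ODEs, applied over the compact set $S_p$, there is $\epsilon>0$ such that for every $y\in S_p$ the geodesic $\gamma_y$ with $\gamma_y(0)=p$ and $\dot\gamma_y(0)=y$ is defined on the symmetric interval $(-\epsilon,\epsilon)$. Hence every unit-speed geodesic issuing from $p$ is defined at least on $(-\epsilon,\epsilon)$. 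Since the isometry group of $(M,F)$ acts transitively and isometries carry unit-speed geodesics to unit-speed geodesics, conjugating by an isometry $\phi$ with $\phi(p)=x$ shows that the \emph{same} $\epsilon$ works at every point $x\in M$.

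With the uniform $\epsilon$ in hand, suppose $\sigma:[0,a)\to M$ is a maximal unit-speed geodesic with $a<\infty$; we may assume $a>\epsilon/2$. The geodesic starting at $\sigma(a-\epsilon/2)$ with velocity $\dot\sigma(a-\epsilon/2)$, which is a unit vector, exists on a parameter interval of length $2\epsilon$, and by uniqueness it coincides with $\sigma$ where they overlap, so $\sigma$ extends to $[0,a+\epsilon/2)$, contradicting maximality. Running the identical argument to the left shows that every maximal backward geodesic is defined on $(-\infty,0]$. Thus $(M,F)$ is forward and backward geodesically complete, and by Hopf--Rinow it is complete. The one genuinely delicate point is the existence of the uniform $\epsilon$: this is exactly where homogeneity is used, collapsing the question to the single compact indicatrix $S_p$. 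Note that reversibility of $F$ is nowhere assumed --- short-time existence of geodesics is two-sided, so the symmetric interval $(-\epsilon,\epsilon)$ transported by isometries delivers the forward and backward statements simultaneously.
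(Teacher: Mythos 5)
Your argument is correct, and it is worth noting that the paper itself offers no proof of this lemma: it is imported verbatim from \cite{TN2}, with the remark that it improves Latifi--Razavi's forward geodesic completeness result \cite{LR}. Your route is the standard self-contained one and it delivers both directions at once: compactness of the indicatrix $S_p$ plus smoothness of the spray on $TM_0$ gives a short-time existence interval $(-\epsilon,\epsilon)$ uniform over $S_p$, transitivity of the isometry group transports the same $\epsilon$ to every point, the continuation argument then rules out finite maximal parameter intervals in either direction, and the Finslerian Hopf--Rinow theorem converts forward and backward geodesic completeness into metric completeness. The only point you pass over silently is that the paper's definition of homogeneity is in terms of the isometry group, so to say that an isometry carries unit vectors to unit vectors and geodesics to geodesics (hence preserves the uniform $\epsilon$) one needs the Finslerian Myers--Steenrod theorem of Deng--Hou, which guarantees that distance-preserving bijections are smooth and satisfy $F\circ d\phi=F$; with that standard fact cited, your proof is complete. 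By contrast, the published route goes through the forward statement of \cite{LR} first and then upgrades it (for instance, one can observe that the reverse metric $\tilde F(x,y):=F(x,-y)$ is homogeneous with the same isometry group, so its forward completeness is exactly the backward completeness of $F$); your direct uniform-$\epsilon$ argument avoids that detour, at the cost of redoing the short-time existence analysis that \cite{LR} already encapsulates.
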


\bigskip
By definition, every two points of a homogeneous Finsler manifold $(M, F)$ map to each other under an isometry. This causes the norm of  an invariant  tensor  under the isometries of  a homogeneous Finsler manifold is a constant function on $M$, and consequently, it has a bounded norm. Then, we conclude the following.
\begin{lem}{\rm (\cite{TN1})}\label{lem2}
Let $(M, F)$ be a homogeneous Finsler manifold. Then, every invariant tensor under the isometries of $F$ has a bounded norm with respect to $F$.
\end{lem}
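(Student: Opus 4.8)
The plan is to reduce the assertion to the elementary fact that an isometry-invariant function on a homogeneous space is constant, hence bounded. First I would fix a tensor field $T$ on $M$ that is invariant under every isometry of $(M,F)$, and introduce its pointwise norm $\|T\|$ measured with respect to $F$: at each $x\in M$ this is assembled from the components of $T$ and the fundamental tensor $g_{ij}(x,y)$, with the supremum taken over the indicatrix $\{y\in T_xM:F(x,y)=1\}$ (and over unit arguments of the appropriate type). Since the indicatrix is compact and each $g_y$ is positive-definite, $\|T\|(x)$ is finite for every $x$, so $\|T\|$ is a genuine real-valued function on $M$.

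The key step is to observe that $\|T\|$ is built naturally from $F$ and $T$: if $\phi$ is an isometry of $(M,F)$, then $\phi$ preserves $F$ — and therefore the indicatrix and the fundamental tensor — and it preserves $T$ by hypothesis, so $\|T\|\circ\phi=\|T\|$. Now invoke homogeneity: by definition the isometry group of $(M,F)$ acts transitively on $M$, so given any $x,x'\in M$ there is an isometry carrying $x$ to $x'$, and the identity just established yields $\|T\|(x)=\|T\|(x')$. Hence $\|T\|$ is constant on $M$, and in particular has a bounded norm with respect to $F$, which is the claim.

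I do not expect any genuine obstacle here; the only point worth a line of justification is the finiteness of $\|T\|(x)$ at each point, i.e. that the fiberwise supremum is attained. This is where compactness of the indicatrix and positive-definiteness of $F$ — both part of the standing hypotheses on a Finsler metric in this paper — are used, and it is what makes the passage from an invariant function on $M$ to a globally bounded function legitimate.
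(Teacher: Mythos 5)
Your argument is correct and is essentially the paper's own: the authors likewise observe that transitivity of the isometry group forces the pointwise $F$-norm of an isometry-invariant tensor to be constant on $M$, hence bounded. Your added remark on finiteness of the fiberwise supremum (compactness of the indicatrix plus positive-definiteness of $g_y$) is a sensible, if routine, elaboration of the same proof.
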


\bigskip

\noindent
{\bf Proof of Theorem \ref{MainTHM2}:} We first deal with Finsler surfaces. The special and useful Berwald frame was introduced and developed by Berwald \cite{B}. Let $(M, F)$ be a two-dimensional Finsler manifold. One can define a local field of orthonormal frame $(\ell^i,m^i)$ called the Berwald frame, where  $\ell^i=y^i/F$, $m^i$ is the unit vector with $\ell_i m^i=0$,   $\ell_i=g_{ij}\ell^i$ and $g_{ij}$ is  defined by $g_{ij}=\ell_i\ell_j+m_im_j$.  In \cite{BM}, it is proved that the Douglas curvature  of the Finsler surface $(M, F)$ is given by following
\[
D^i_{\ jkl}  = -\frac{1}{ 3F^2}\Big(6I_{,1}+ I_{2|2} + 2II_2\Big)m_jm_km_ly^i.
\]
We rewrite it as   equivalently
\be
{\bf D}_y(u,v,w)={\bf T}(u, v, w) y\label{GDW1}
\ee
where ${\bf T}(u, v, w):=T_{ijk}u^iv^jw^k$ and $T_{ijk}:=-{1}/(3F^2)(6I_{,1}+ I_{2|2} + 2II_2)m_im_jm_k$.  It is easy to see that ${\bf T}$ is a symmetric Finslerian tensor filed and satisfies the following
\[
{\bf T}(y, v, w)=0.
\]
Let us denote the Berwald connection  of $F$ by $D$. The horizontal and vertical derivation with of a Finsler tensor field are denoted by `` $D_{u}$ " and `` $D_{\dot u}$ " respectively. Taking a horizontal derivation of \eqref{GDW1} along Finslerian geodesics implies that
\be
D_0{\bf D}_y(u,v,w)=D_0{\bf T}(u, v, w)y,\label{GDW1.5}
\ee
where $D_0:=D_iy^i$. Let us define ${\bf h}_y:T_xM\to T_xM$ by
\[
{\bf h}_y(u)=u-{1 \over F^2 }{\bf g}_y(u,y)y.
\]
Since ${\bf h}_y(y)=0$, it follows from  \eqref{GDW1.5}  that
\begin{equation}
{\bf h}_y\big(D_0{\bf D}_y(u,v,w)\big)=0.\label{GDW2}
\end{equation}
On the other hand. the Douglas tensor of $F$  is given by
\begin{equation}
{\bf D}_y(u,v,w)={\bf B}_y(u,v,w)-\frac{2}{3}\Big\{{\bf E}_y(v, w)u+{\bf E}_y(w, u)v+{\bf E}_y(u, v)w+(D_{\dot u}{\bf E}_y)(v,w)y)\Big\}.\label{GD2}
\end{equation}
Then
\begin{equation}
{\bf h}_y\big(D_0{\bf D}_y(u, v, w)\big)={\bf h}_y\big(D_0{\bf B}_y(u, v, w)\big)-\frac{2}{3}\Big\{{\bf H}_y(u,v) {\bf h}_y(w)+{\bf H}_y(v, w) {\bf h}_y(u)+{\bf H}_y(w, u) {\bf h}_y(v)\Big\}.\label{GD3}
\end{equation}
Let us define
\[
\tilde{\bf B}_y:=D_0{\bf B}_y.
\]
Indeed, $\tilde{\bf B}_y$ is the horizontal derivative of Berwald curvature along Finsler geodesics. By (\ref{GDW2}) and (\ref{GD2}), we get
\begin{equation}
{\bf h}_y\big(\tilde{\bf B}_y(u, v, w)\big)=\frac{2}{3}\Big\{{\bf H}_y(u,v) {\bf h}_y(w)+{\bf H}_y(v, w) {\bf h}_y(u)+{\bf H}_y(w, u) {\bf h}_y(v)\Big\}.\label{GD4a}
\end{equation}
Using $D_i{\bf h}=0$ yields
\begin{equation}
{\bf h}_y\big(D_i\tilde{\bf B}_y(u, v, w)\big)=\frac{2}{3}\Big\{D_i{\bf H}_y(u,v) {\bf h}_y(w)+D_i{\bf H}_y(v, w) {\bf h}_y(u)+D_i{\bf H}_y(w, u) {\bf h}_y(v)\Big\}.\label{GD5}
\end{equation}
Using $g_y({\bf B}_y(u,v,w),y)=-2{\bf L}_y(u,v,w)$, we get
\begin{eqnarray}\label{GD7}
D_i\big( {\bf h}_y\tilde{\bf B}_y(u, v, w)\big)\!\!\!\!&=&\!\!\!\! {\bf h}_y\big(D_i\tilde{\bf B}_y(u, v, w)\big)\nonumber\\
\!\!\!\!&=&\!\!\!\!  D_iD_0\big( {\bf h}_y{\bf B}_y(u, v, w)\big)\nonumber\\
\!\!\!\!&=&\!\!\!\!  D_iD_0\Big ({\bf B}_y(u, v, w)-{1 \over F^2}{\bf g}_y\big({\bf B}_y(u, v, w), y\big)\Big )\nonumber\\
\!\!\!\!&=&\!\!\!\!  D_i\tilde{\bf B}_y(u, v, w)+{2 \over F^2}D_iD_0{\bf L}_y(u,v,w)y.
\end{eqnarray}
By (\ref{GD5}),  (\ref{GD7}), and ${\bf L}=0$, we obtain
\begin{equation}
D_i\tilde{\bf B}_y(u, v, w)=\frac{2}{3}\Big\{D_i{\bf H}_y(u,v) {\bf h}_y(w)+D_i{\bf H}_y(v, w) {\bf h}_y(u)+D_i{\bf H}_y(w, u) {\bf h}_y(v)\Big\}.\label{GD1}
\end{equation}
The relation \eqref{GD1} yields
\begin{eqnarray}
D_h\tilde{\bf B}_y(u, v, \partial_k)-D_k\tilde{\bf B}_y(u, v, \partial_h)&=&\frac{2}{3}\Big\{D_h{\bf H}_y(u,v) {\bf h}_y(\partial_k)-D_k{\bf H}_y(u,v) {\bf h}_y(\partial_h)\Big\}\nonumber\\
&+&\frac{2}{3}\Big\{\big(D_h{\bf H}_y(v, \partial_k)-D_k{\bf H}_y(v, \partial_h) \big) {\bf h}_y(u)\Big\}\nonumber\\
&+&\frac{2}{3}\Big\{\big(D_h{\bf H}_y(\partial_k, u)-D_k{\bf H}_y(\partial_h, u)\big) {\bf h}_y(v)\Big\}.\label{GD1b}
\end{eqnarray}
By definition, we have $tr(\tilde{\bf B})=2{\bf H}$ and $tr({\bf h})=1$.
Then, (\ref{GD1b}) implies that
\begin{eqnarray}
D_h{\bf H}_y(u,\partial_k)-D_k{\bf H}_y(u,\partial_h)=2\Big\{D_h{\bf H}_y(u,\partial_k)-D_k{\bf H}_y(u,\partial_h)\Big\},\label{GD11}
\end{eqnarray}
which yields
\begin{equation}
D_h{\bf H}_y(u,\partial_k)=D_k{\bf H}_y(u,\partial_h).\label{GD12}
\end{equation}
Contracting (\ref{GD12}) with $y^h$ and using $D_k{\bf H}_y(u,y)=0$, we get
\begin{equation}
D_0{\bf H}_y(u,w)=0.\label{GD13}
\end{equation}
Take an arbitrary unit vector $y\in T_xM$ and an arbitrary vector $v\in T_xM$. Let $c(t)$ be the geodesic with $\dot c(0)=y$ and $U=U(t)$ the parallel vector field along $c$ with $V(0)=v$. In order to avoid clutter, we put
\begin{equation}
{\bf E}(t)={\bf E}_{\dot c}(U(t),U(t)), \ \ \ \ \ {\bf H}(t)={\bf H}_{\dot c}(U(t),U(t)).\label{GD14}
\end{equation}
From the definition of ${\bf H}_y$, we have
\begin{equation}
{\bf H}(t)={\bf E}^{'}(t).\label{GD15x}
\end{equation}
By  (\ref{GD13}) we have ${\bf H}^{'}(t)=0$  which implies that
\begin{equation}
{\bf H}(t)={\bf H}(0).\label{GD15xx}
\end{equation}
Then by (\ref{GD15x}) and (\ref{GD15xx}), we get
\begin{equation}
{\bf E}(t)={\bf H}(0)t+{\bf E}(0).\label{GD10}
\end{equation}
Since ${\bf E}(t)$ is a bounded function on $[0,\infty)$, then letting $ t\rightarrow +\infty $ or $ t\rightarrow -\infty $ implies that
\[
{\bf H}_y(v,v)={\bf H}(0)=0.
\]
Therefore ${\bf H}=0$. According to Akbar-Zadeh's theorem every  Finsler metric  $F=F(x, y)$ of scalar flag curvature ${\bf K}={\bf K}(x, y)$
on an $n$-dimensional  manifold $M$ has isotropic flag curvature ${\bf K}={\bf K}(x)$  if and only if $\textbf{H}=0$ \cite{AZ}. Every Finsler surface has scalar flag curvature ${\bf K}={\bf K}(x, y)$. Then by Akbar-Zadeh theorem,  we get ${\bf K}={\bf K}(x)$.
\qed

\bigskip

\bigskip

Now, we can prove the Theorem \ref{MainTHM1}.

\bigskip

\noindent
{\bf Proof of Theorem \ref{MainTHM1}:} Let $(M, F)$ be a homogeneous Landsberg surface and fix a point $x\in M$. Suppose that $y=y(t)$ is a unit speed parametrization of indicatrix of $M$ at $x$. We know that the curvature along $y(t)$ is completely determined by the Cartan scalar of $F$, i.e., we have
\[
{\bf K}(t)={\bf K}(0)\ e^{\int_0^tI(s)ds}.
\]
Thus either ${\bf K}(t)$ vanishes every where or it is non-zero every where  and ${\bf K}(t)$ has the same sign as the sign of ${\bf K}(0)$. On the other hand, for homogeneous Finsler surfaces the flag curvature is a bounded scalar function on $SM$. Suppose that $\lambda_1\leq {\bf K}(t)\leq \lambda_2$. In this case, we have

\[
e^{\lambda_1 t}\leq C(0)e^{\int_0^t {\bf K}(s)ds}\leq e^{\lambda_2 t}.
\]
Suppose that $C(0)\neq 0$. Then we consider two following cases:\\\\
{\bf Case 1:}  If $\lambda_1$ and $\lambda_2$ are positive, then letting $t\to \infty$ implies that $C(t)$ is unbounded, which is a contradiction.\\\\
{\bf Case 2:} If $\lambda_1$ and $\lambda_2$ are negative, then letting $t\to -\infty$ implies that $C(t)$ is unbounded, which is a contradiction.\\\\
Thus, every homogeneous Landsberg surface is Riemannian or flat. On the other hand, by Akbar-Zadeh's theorem   any positively complete Finsler metric with zero flag curvature  must be locally Minkowskian if the first and second Cartan torsions are bounded \cite{AZ}. For the homogeneous Finsler metrics, the first and second Cartan torsions are bounded. Then in this case, $F$ reduces to a locally Minkowskian metric. This completes the proof.
\qed

\bigskip

It is worth to mention that, in general, every Landsberg metric of non-zero scalar flag curvature is Riemannian, provided that its dimension is greater than two. Theorem \ref{MainTHM1} is Numata type theorem for homogeneous Finsler surfaces.

\begin{cor}
Let $(M, F)$ be a  homogeneous Finsler surface of non-positive flag curvature. Then $F$ is a Landsberg metric if and only if it has isotropic flag curvature. In this case, $F$ is Riemannian or locally Minkowskian.
\end{cor}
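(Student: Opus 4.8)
The plan is to derive the corollary by combining Theorems \ref{MainTHM1} and \ref{MainTHM2} with the classical rigidity theorems of Akbar--Zadeh and the completeness and boundedness facts for homogeneous metrics recorded in Lemmas \ref{lem1} and \ref{lem2}. The ``only if'' direction is immediate and does not use the sign hypothesis: if $F$ is a Landsberg metric then, by Theorem \ref{MainTHM2}, it has isotropic flag curvature.

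For the converse, suppose that $F$ has isotropic flag curvature, so ${\bf K}={\bf K}(x)$. Since the flag curvature is an isometry invariant of $F$ and the isometry group of $F$ acts transitively on $M$, the function ${\bf K}(x)$ is a constant $\lambda$, and the non-positivity hypothesis forces $\lambda\le 0$. By Lemma \ref{lem1}, $(M,F)$ is complete, and by Lemma \ref{lem2} the Cartan torsion and mean Cartan torsion, being invariant under the isometries, have bounded norm. If $\lambda=0$, then $F$ is a complete Finsler surface of vanishing flag curvature with bounded Cartan torsions, so Akbar--Zadeh's theorem \cite{AZ} shows that $F$ is locally Minkowskian. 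If $\lambda<0$, then $F$ is a complete Finsler surface of constant negative flag curvature, so Akbar--Zadeh's rigidity theorem \cite{AZ} shows that $F$ is Riemannian. In either case $F$ is Berwald, hence Landsberg, which proves the ``if'' direction; at the same time we have shown that $F$ is Riemannian or locally Minkowskian, which is the final assertion --- this last part is also the content of Theorem \ref{MainTHM1} once $F$ is known to be Landsberg.

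The only step needing real care is the case $\lambda<0$, i.e., the fact that a complete Finsler surface of constant negative flag curvature is Riemannian. If one prefers not to invoke that rigidity result, this case can instead be handled by the indicatrix computation from the proof of Theorem \ref{MainTHM1}: along a unit-speed parametrization $y(t)$ of the indicatrix at a point $x$ one has ${\bf K}(t)={\bf K}(0)\,e^{\int_0^t I(s)\,ds}$, so ${\bf K}(t)\equiv\lambda\neq 0$ gives $\int_0^t I(s)\,ds\equiv 0$, hence the Cartan scalar $I$ vanishes at $x$; as $x$ is arbitrary, $I\equiv 0$ on $M$ and $F$ is Riemannian by Deicke's theorem. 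The remaining case $\lambda=0$ is a direct application of the zero flag curvature statement of \cite{AZ}. Beyond this, the argument is bookkeeping: one must only check completeness and boundedness of the Cartan torsions, both of which follow from homogeneity by Lemmas \ref{lem1} and \ref{lem2}.
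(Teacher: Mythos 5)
Your main argument is correct, but it takes a different route from the paper. The paper proves the ``if'' direction by quoting Theorem 8.1 of \cite{BCS}: a geodesically complete Finsler surface with non-positive isotropic flag curvature ${\bf K}(x)\le 0$ and bounded Cartan scalar is Landsberg (completeness and boundedness coming from Lemmas \ref{lem1} and \ref{lem2}), and then it invokes Theorem \ref{MainTHM1} for the final dichotomy. You instead exploit homogeneity to upgrade ${\bf K}(x)$ to a constant $\lambda\le 0$ and then apply Akbar--Zadeh's rigidity theorems \cite{AZ} directly: $\lambda<0$ with bounded Cartan torsion gives Riemannian, $\lambda=0$ with bounded first and second Cartan torsions gives locally Minkowskian, and in either case $F$ is Berwald, hence Landsberg. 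This bypasses the Bao--Chern--Shen theorem entirely (and even Theorem \ref{MainTHM1} for the last assertion), at the price of using the negative-curvature half of Akbar--Zadeh's theorem, which the paper never needs; the paper's route keeps ``Landsberg'' as the genuine intermediate conclusion and works directly from ${\bf K}(x)\le 0$ without first reducing to constant curvature. For the $\lambda=0$ case note that \cite{AZ} requires boundedness of both the Cartan torsion and its derivative (the ``first and second Cartan torsions'' in the paper's wording), which Lemma \ref{lem2} still supplies.

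One caveat on your proposed alternative for $\lambda<0$: the identity ${\bf K}(t)={\bf K}(0)\,e^{\int_0^t I(s)\,ds}$ along the indicatrix is a consequence of the Bianchi identity \emph{with vanishing Landsberg scalar}; in the proof of Theorem \ref{MainTHM1} it is legitimately used because $F$ is assumed Landsberg there. In the ``if'' direction of this corollary the Landsberg property is precisely what is to be proved, so invoking that formula would be circular. Since you present it only as an optional substitute, your primary proof is unaffected, but that alternative should be dropped or replaced by the Akbar--Zadeh (or Bao--Chern--Shen) argument.
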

\begin{proof}
According to Theorem 8.1 of \cite{BCS}, every geodesically complete Finsler surface of non-positive isotropic flag curvature ${\bf K}(x)\leq 0$ and bounded Cartan scalar is a Landsberg metric.  Then,  by Theorem \ref{MainTHM1}, we get the proof.
\end{proof}

\bigskip

In \cite{1924}, L. Berwald introduced a non-Riemannian curvature so-called stretch curvature and denoted it by ${\bf \Sigma}_y$. He  showed that this tensor vanishes if and only if the length of a vector remains unchanged under the parallel displacement along an infinitesimal parallelogram. 
\begin{cor}
Every homogeneous stretch surface is Riemannian or locally Minkowskian.
\end{cor}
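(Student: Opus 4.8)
The plan is to weaken the Landsberg hypothesis in Theorems \ref{MainTHM1} and \ref{MainTHM2} to the condition that ${\bf L}$ be parallel along geodesics, and then to check that every stretch surface satisfies this weaker condition. Recall that Berwald's stretch curvature is the Finslerian tensor ${\bf \Sigma}_y$ with components $\Sigma_{ijkl}:=2(L_{ijk|l}-L_{ijl|k})$, where $|$ denotes horizontal covariant differentiation with respect to the Berwald connection, so that $F$ is a stretch metric exactly when ${\bf \Sigma}=0$. Since each $G^i$ is positively homogeneous of degree two in $y$, one has $B^i_{\ jkl}y^l=0$, whence ${\bf L}_y(u,v,y)=0$ and $L_{ijl}y^l=0$; moreover $y^l_{\ |k}=0$ for the Berwald connection. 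Contracting the definition with $y^l$ therefore collapses it to
\[
\Sigma_{ijk0}=2\,L_{ijk|0},
\]
and consequently ${\bf \Sigma}=0$ forces $D_0{\bf L}=0$; that is, on a stretch manifold the Landsberg curvature is parallel along Finsler geodesics.

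The second step is to observe that the proof of Theorem \ref{MainTHM2} never genuinely uses ${\bf L}=0$, only $D_0{\bf L}=0$. Indeed the Landsberg condition is invoked there exactly once, in passing from \eqref{GD7} to \eqref{GD1}, and only in order to discard the term $\tfrac{2}{F^2}D_iD_0{\bf L}_y(u,v,w)\,y$; if $D_0{\bf L}=0$ then $D_iD_0{\bf L}=0$ as well, so this term still vanishes and \eqref{GD1} follows. Everything else --- the form \eqref{GDW1} of the two-dimensional Douglas tensor, the relation ${\bf h}_y(D_0{\bf D}_y)=0$ of \eqref{GDW2}, the decomposition \eqref{GD5}, the antisymmetrization \eqref{GD1b}, taking the trace via $\mathrm{tr}(\tilde{\bf B})=2{\bf H}$ and $\mathrm{tr}({\bf h})=1$, the identities \eqref{GD12}--\eqref{GD13}, and the linear growth \eqref{GD10} of ${\bf E}(t)$ together with the boundedness argument closing it off --- is either purely formal or uses only homogeneity. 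Hence the proof of Theorem \ref{MainTHM2} in fact establishes: every homogeneous Finsler surface with $D_0{\bf L}=0$ has ${\bf H}=0$, and therefore, by Akbar-Zadeh's theorem, isotropic flag curvature ${\bf K}={\bf K}(x)$. In particular this holds for every homogeneous stretch surface.

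Finally, one runs the proof of Theorem \ref{MainTHM1} unchanged: that argument invokes the Landsberg condition only through Theorem \ref{MainTHM2}, and we now have isotropic flag curvature ${\bf K}={\bf K}(x)$ at our disposal, bounded because $M$ is homogeneous. The indicatrix identity ${\bf K}(t)={\bf K}(0)\,e^{\int_0^t I(s)\,ds}$ is a two-dimensional fact that does not refer to ${\bf L}$, so the sign-and-boundedness dichotomy still forces ${\bf K}\equiv 0$ or $F$ Riemannian, and in the flat case Akbar-Zadeh's theorem yields ``locally Minkowskian'', since $F$ is complete by Lemma \ref{lem1} and its first and second Cartan torsions are bounded by homogeneity. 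This proves the corollary. The only delicate point --- and thus the main obstacle --- is the bookkeeping in the second step: one must verify carefully that in the proof of Theorem \ref{MainTHM2} the hypothesis ${\bf L}=0$ is used nowhere except to annihilate $D_iD_0{\bf L}$, and that the contraction $\Sigma_{ijk0}=2L_{ijk|0}$ really yields the full tensor identity $D_0{\bf L}=0$ rather than merely a trace of it, so that invoking $D_iD_0{\bf L}=0$ is legitimate.
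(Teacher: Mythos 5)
Your route is genuinely different from the paper's: the paper disposes of this corollary in one line, quoting the result of \cite{TN1} that every homogeneous stretch metric is a Landsberg metric and then applying Theorem \ref{MainTHM1}, whereas you avoid that citation and instead weaken the hypothesis of the two main theorems to $D_0{\bf L}=0$. Your first two steps are sound: contracting $\Sigma_{ijkl}=2(L_{ijk|l}-L_{ijl|k})$ with $y^l$, using $L_{ijl}y^l=0$ and $y^l_{\ |k}=0$, does give $L_{ijk|m}y^m=0$ on a stretch manifold; and in the paper's proof of Theorem \ref{MainTHM2} the Landsberg hypothesis is indeed used only to annihilate the term $\frac{2}{F^2}D_iD_0{\bf L}_y(u,v,w)\,y$ in \eqref{GD7} (the identities $D_i{\bf h}=0$, $y_{i|k}=0$, ${\rm tr}(\tilde{\bf B})=2{\bf H}$, etc.\ hold for any Finsler surface), so $D_0{\bf L}=0$ suffices to reach ${\bf H}=0$ and hence ${\bf K}={\bf K}(x)$.

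The gap is in your final step. The identity ${\bf K}(t)={\bf K}(0)e^{\int_0^tI(s)ds}$ along the indicatrix is not ``a two-dimensional fact that does not refer to ${\bf L}$'': it is the integrated form of the surface Bianchi identity, which for a general Finsler surface reads (up to normalization conventions) $K_{,2}+KI+J_{|1}=0$, and the term involving the Landsberg scalar $J$ is dropped precisely because of the Landsberg hypothesis. The Funk metric on the disk, which has constant flag curvature $-1/4$ and is nowhere Riemannian, already violates your ${\bf L}$-free version of the formula: the left-hand side is constant along each indicatrix while $\int_0^tI(s)ds$ is not identically zero. So the proof of Theorem \ref{MainTHM1} does invoke ${\bf L}=0$ beyond Theorem \ref{MainTHM2}, contrary to your bookkeeping claim. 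The gap is repairable, but only by appealing to the stretch hypothesis once more: $L_{ijk|m}y^m=0$ implies $J_{i|m}y^m=0$ (because $g^{jk}_{\ \ |m}y^m=2L^{jk}_{\ \ m}y^m=0$), and in dimension two this kills exactly the extra Bianchi term, so $K_{,2}=-KI$ and the exponential formula survive; alternatively, and more directly, once ${\bf K}={\bf K}(x)$ is constant by homogeneity, the scalar-flag-curvature identity $L_{ijk|m}y^m=-{\bf K}F^2C_{ijk}$ combined with $L_{ijk|m}y^m=0$ gives ${\bf K}\,{\bf C}=0$, hence either $F$ is Riemannian or ${\bf K}=0$, and the flat case is closed by Akbar-Zadeh's theorem exactly as in the paper. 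As written, your justification of the last step is incorrect and one of these arguments must be inserted.
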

\begin{proof}
Every Landsberg metric is a stretch metric. In \cite{TN1}, it is proved that every homogeneous stretch metric is a Landsberg metric. Then,  by Theorem \ref{MainTHM1}, we get the proof.
\end{proof}

\bigskip

In \cite{BF}, Bajancu-Farran introduced a new class of Finsler metrics, called generalized Landsberg metrics. This class of Finsler metrics contains the class of Landsberg metrics as a special case. A Finsler metric $F$ on a manifold $M$ is called  generalized Landsberg metric the Riemannian curvature tensors of the Berwald and Chern connections coincide.  
\begin{cor}
Every homogeneous generalized Landsberg surface is  Riemannian or locally Minkowskian.
\end{cor}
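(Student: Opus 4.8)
The plan is to reduce the statement to Theorem \ref{MainTHM1} through the intermediate assertion that \emph{every homogeneous generalized Landsberg surface is in fact a Landsberg surface}; granting this, Theorem \ref{MainTHM1} applies and forces $F$ to be Riemannian or locally Minkowskian. One inclusion is for free: as recalled above, the class of generalized Landsberg metrics contains all Landsberg metrics, so the whole content is the converse on a homogeneous surface.

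For the converse I would work in the Berwald frame $(\ell^i,m^i)$ of the surface $(M,F)$, as in the proof of Theorem \ref{MainTHM2}. On a Finsler surface the Landsberg curvature is carried by a single scalar $L$, namely ${\bf L}_y(u,v,w)=L\,m_im_jm_ku^iv^jw^k$, whence ${\bf J}_y(u)=L\,m_iu^i$; thus in dimension two ${\bf L}=0$ if and only if ${\bf J}=0$ if and only if $L=0$. Expressed in this frame, the $hh$-curvature of the Berwald connection and the $hh$-curvature of the Chern connection are each governed by the flag curvature ${\bf K}={\bf K}(x,y)$ together with $L$ and its horizontal derivatives along the geodesic flow, and their difference is a Finslerian tensor built solely from $L$ and those derivatives. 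Hence the generalized Landsberg equation (the vanishing of this difference) reduces, along every Finslerian geodesic $c(t)$, to an ordinary differential equation for the scalar $L(t)$ carried along $c$ that is structurally parallel to the chain \eqref{GD14}--\eqref{GD10} in the proof of Theorem \ref{MainTHM2}.

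The homogeneity hypothesis is then used exactly as there: by Lemma \ref{lem1} the geodesic $c(t)$ is defined for all $t\in\mathbb{R}$, and by Lemma \ref{lem2} the Landsberg tensor --- being invariant under the isometry group of $F$ --- has bounded $F$-norm, so $L(t)$ is bounded on $\mathbb{R}$. A bounded global solution of the differential equation produced in the previous step must vanish: letting $t\to+\infty$ or $t\to-\infty$ and arguing as in the proof of Theorem \ref{MainTHM1} gives a contradiction unless $L\equiv0$ along $c$. Since $c$ was an arbitrary geodesic, ${\bf L}=0$ on $M$, i.e.\ $F$ is a Landsberg surface, and Theorem \ref{MainTHM1} finishes the proof.

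I expect the main obstacle to be the surface curvature computation in the middle step: one must write out the $hh$-curvature of the Berwald connection of $(M,F)$ in the Berwald frame, subtract the $hh$-curvature of the Chern connection (equivalently, relate it to the Riemann curvature \eqref{Riemannx} of the spray), identify the tensor measuring the discrepancy, and pin down the precise first-order differential equation it imposes on $L$ along geodesics --- in particular checking that this equation is of the type whose only bounded global solution is zero. The structure equations of a Finsler surface make the computation tractable, but it carries all the weight; everything after it is a transcription of the arguments already used to prove Theorems \ref{MainTHM1} and \ref{MainTHM2}.
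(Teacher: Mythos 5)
Your overall strategy (show the metric is Landsberg, then quote Theorem \ref{MainTHM1}) is reasonable, but the middle step that you yourself say ``carries all the weight'' is not just left undone --- as stated it would fail. On a surface the Landsberg tensor has the form $L_{ijk}=L\,m_im_jm_k$, so the quadratic terms $L^i_{\ sk}L^s_{\ jl}-L^i_{\ sl}L^s_{\ jk}$ in the generalized Landsberg equation \eqref{GL} cancel identically, and the condition reduces to the differential part $L_{ijl|k}-L_{ijk|l}=0$. Contracting with $y^k$ (using $L_{ijk}y^k=0$ and $y^k_{\ |l}=0$ for the Berwald connection) gives $L_{ijl|k}y^k=0$: along every geodesic the scalar $L(t)$ is \emph{constant}. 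That is the only ODE the generalized Landsberg condition produces along the geodesic flow, and a bounded constant yields no contradiction as $t\to\pm\infty$; so invoking Lemma \ref{lem2} for the Landsberg tensor, as you do, cannot force $L\equiv0$ --- there is no equation ``structurally parallel to \eqref{GD14}--\eqref{GD10}'' built from $L$ alone. To close the gap you need a second quantity: since the Landsberg curvature is the rate of change of the Cartan torsion along geodesics (the trilinear analogue of the identity ${\bf J}_y(u)=\tfrac{d}{dt}\big[{\bf I}_{\dot\sigma(t)}(U(t))\big]_{t=0}$ recalled in Section \ref{sectionP}, i.e.\ $C_{ijk|m}y^m=L_{ijk}$), constancy of $L(t)$ gives ${\bf C}(t)=L(0)\,t+{\bf C}(0)$ along a geodesic with parallel vector fields; completeness (Lemma \ref{lem1}) together with boundedness of the \emph{Cartan} torsion on a homogeneous space (Lemma \ref{lem2}) then forces $L(0)=0$, hence ${\bf L}=0$, and Theorem \ref{MainTHM1} applies. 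Without this ingredient your argument is a genuine gap, not merely an omitted computation.

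For comparison, the paper proceeds quite differently and purely algebraically: it splits \eqref{GL} according to its symmetry in the first two indices into the quadratic part \eqref{GL4} and the derivative part, uses the two-dimensional relation $L_{jkl}+\mu FC_{jkl}=0$ of \eqref{B3b} with $\mu=-4I_{,1}/I$, and concludes that either a vv-curvature-type expression in $C$ vanishes (whence $F$ is Riemannian by Schneider's theorem) or $\mu=0$, i.e.\ $F$ is Landsberg; in both cases Theorem \ref{MainTHM1} finishes. In particular the paper uses homogeneity only through Theorem \ref{MainTHM1}, whereas your dynamical route uses it twice (completeness and bounded Cartan torsion); the dynamical route can be repaired as indicated above, but as submitted it does not prove the corollary.
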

\begin{proof}
By definition, we have
\be
L^i_{\ jl|k}-L^i_{\ jk|l}+L^i_{\ sk}L^s_{\ jl}-L^i_{\ sl}L^s_{\ jk}=0,\label{GL}
\ee
where ``$|$" denotes the horizontal derivation with respect to the Berwald connection of $F$. By \eqref{GL}, we get
\begin{eqnarray}
&&L_{isk}L^s_{\ jl}-L_{isl}L^s_{\ jk}=0,\label{GL4} \\
&&L_{ijl|k}-L_{ijk|l}=0.\label{GL5}
\end{eqnarray}
The Landsberg curvature of Finsler surface satisfies
\be\label{B3b}
L_{jkl}+\mu FC_{jkl}=0.
\ee
where $\mu:=-{4I_{,1}}/{I}$.  By \eqref{GL4} and \eqref{B3b}, we get
\begin{eqnarray}
\mu F\Big\{C_{isk}C^s_{\ jl}-C_{isl}C^s_{\ jk}\Big\}=0.\label{GL5}
\end{eqnarray}
We have two cases: If $C_{isk}C^s_{\ jl}-C_{isl}C^s_{\ jk}=0$, then the vv-curvature is vanishing. In \cite{Sc}, Schneider proved that vv-curvature is vanishing if and only if $F$ is Riemannian.   If $\mu=0$, then by \eqref{B3b} it follows that $F$ is a Landsberg metric.  By Theorem \ref{MainTHM1}, we get the proof.
\end{proof}

\bigskip
Let us define ${\bf \tilde J}= {\tilde J}_{ij}dx^i\otimes dx^j$, by 
\be
{\bf \tilde J}:=\big(J_{i,j}+J_{j,i}\big)_{|m}y^m.\label{X1}
\ee
In \cite{X}, Xia proved that every $n$-dimensional compact Finsler manifold with ${\bf \tilde J}=2{\bf \tilde H}$ is a weakly Landsberg metric. Here, we prove the following.
\begin{cor}\label{Prop1}
Every homogeneous Finsler surface satisfying  ${\bf \tilde J}=2{\bf \tilde H}$ is  Riemannian or locally Minkowskian.
\end{cor}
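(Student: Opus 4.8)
The plan is to reduce the statement to Theorem~\ref{MainTHM1} by showing, in two stages, that a homogeneous Finsler surface with ${\bf \tilde J}=2{\bf \tilde H}$ is in fact a Landsberg surface. The first stage proves that $F$ is weakly Landsberg. In \cite{X} Xia establishes the implication ``${\bf \tilde J}=2{\bf \tilde H}\Rightarrow{\bf J}=0$'' for \emph{compact} Finsler manifolds; inspecting that proof, it splits into a purely local computation, which rewrites the tensorial hypothesis ${\bf \tilde J}=2{\bf \tilde H}$ as a differential identity for the mean Cartan torsion along geodesics, and a global step, in which an exact form is integrated over the compact sphere bundle. I would retain the local part and replace the global part by homogeneity. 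Concretely, fix $x\in M$, a unit vector $y\in T_xM$ and $v\in T_xM$; by Lemma~\ref{lem1} the geodesic $c=c(t)$ with $\dot c(0)=y$ is defined on all of $\mathbb{R}$, and let $U=U(t)$ be the linearly parallel field along $c$ with $U(0)=v$. Set ${\bf I}(t):={\bf I}_{\dot c(t)}(U(t))$, so that ${\bf J}(t):={\bf J}_{\dot c(t)}(U(t))={\bf I}'(t)$. Xia's local identity forces, under ${\bf \tilde J}=2{\bf \tilde H}$, the function ${\bf I}(t)$ to be a polynomial in $t$ of degree at most two; but ${\bf I}$ comes from the isometry--invariant mean Cartan torsion of a homogeneous metric, hence is bounded on $\mathbb{R}$ by Lemma~\ref{lem2}. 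A bounded polynomial on $\mathbb{R}$ is constant, so ${\bf I}'(0)={\bf J}_y(v)=0$; as $x,y,v$ were arbitrary, ${\bf J}=0$. This is exactly the ``integrate along a complete geodesic, then let $t\to\pm\infty$'' mechanism already used for ${\bf E}(t)$ in \eqref{GD10} in the proof of Theorem~\ref{MainTHM2}.

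The second stage upgrades ${\bf J}=0$ to ${\bf L}=0$ using the surface structure. In the Berwald frame one has $FC_{jkl}=I\,m_jm_km_l$ for the main scalar $I$, so $I_i=g^{jk}C_{ijk}=(I/F)m_i$; combining this with \eqref{B3b} gives $L_{jkl}=-\mu FC_{jkl}=-\mu I\,m_jm_km_l$ and $J_i=g^{jk}L_{ijk}=-\mu I\,m_i$, with $\mu=-4I_{,1}/I$. Hence ${\bf J}_y(u)=-\mu I\,(m_iu^i)$ and ${\bf L}_y(u,u,u)=-\mu I\,(m_iu^i)^3$ carry the same scalar coefficient $-\mu I$, so ${\bf J}=0$ forces $\mu I=0$; since $|{\bf I}|$ is constant on a homogeneous surface, either ${\bf I}\equiv 0$ (so $F$ is Riemannian and we are done) or $I$ vanishes nowhere, in which case $\mu=0$ and $L_{jkl}=-\mu I\,m_jm_km_l=0$. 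Thus $F$ is a homogeneous Landsberg surface, and Theorem~\ref{MainTHM1} finishes the proof.

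The main obstacle is the first stage: one has to be sure that Xia's argument genuinely factors into a local differential identity along geodesics --- which persists verbatim on a non-compact homogeneous manifold --- and a global integration which we are free to discard, and one must check that all the ingredients (${\bf \tilde J}$, ${\bf \tilde H}$, ${\bf I}$, ${\bf J}$) are canonically attached to $F$, hence invariant under the isometry group and, by homogeneity together with Lemma~\ref{lem2}, of bounded norm on $SM$. A minor point to verify along the way is that the polynomial in $t$ produced by ${\bf \tilde J}=2{\bf \tilde H}$ really has degree at most two (or at any rate bounded degree), which is all the boundedness argument needs.
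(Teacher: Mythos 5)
Your proposal is correct and follows essentially the paper's own route: the ``local identity'' you defer to is exactly the Bianchi identity $H_{ij}=\frac{1}{2}\big(J_{i,j}+J_{j,i}-(I_{i,j})_{|p}y^p\big)_{|m}y^m$ quoted from \cite{X}, which under ${\bf \tilde J}=2{\bf \tilde H}$ yields $J_{i|p}y^p=0$ and hence ${\bf I}(t)={\bf J}(0)t+{\bf I}(0)$ along complete geodesics, so that boundedness of the invariant mean Cartan torsion (Lemmas \ref{lem1} and \ref{lem2}) forces ${\bf J}=0$ precisely by your ``bounded polynomial'' mechanism (the degree is in fact at most one). Your second stage via the Berwald frame and $L_{jkl}=-\mu FC_{jkl}$ is the two-dimensional equivalent of the paper's passage from ${\bf J}=0$ to ${\bf L}=0$ through C-reducibility, \eqref{CR}--\eqref{LR}, after which Theorem \ref{MainTHM1} concludes as you state.
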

\begin{proof}
The following Bianchi idenity holds
\be
H_{ij}:=\frac{1}{2}\big(J_{i,j}+J_{j,i}-(I_{i,j})_{|p}y^p\big)_{|m}y^m.\label{X2}
\ee
See \cite{X}. By \eqref{X1} and \eqref{X2}, we get $(I_{i,j})_{|p}y^p=0$ and contracting it with $y^j$ yields
\be
J_{i|p}y^p=0.\label{X3}
\ee
For any geodesic $c=c(t)$ and any parallel vector field $U=U(t)$ along $c$, let us put
\[
{\bf I}(t)={\bf I}_{\dot c}\big(U(t),U(t), U(t)\big),  \ \ \ \  {\bf J}(t)={\bf J}_{\dot c}\big(U(t),U(t), U(t)\big).
\]
Thus, we have
\begin{equation}
{\bf J}(t)={\bf I}^{'}(t).\label{GD15}
\end{equation}
Integrating \eqref{X3} implies that
\[
{\bf I}(t)={\bf J}(0)t+{\bf I}(0).
\]
Every homogeneous manifold $ M $ is complete and the parameter $t$ takes all the values in $ (-\infty,+\infty) $. Letting $ t\rightarrow +\infty $ or $ t\rightarrow -\infty $ we have $ \textbf{I}(t) $ is unbounded which is a contradiction. Therefore $ \textbf{J}(0)={\bf J}(t)=0 $. On the other hand, every Finsler surface is C-reducible
\be
{\bf C}_y(u, v, w)= {1\over 3}\Big\{{\bf I}_y(u){\bf h}_y(v, w)+{\bf I}_y(v){\bf h}_y(u, w)+{\bf I}_y(w){\bf h}_y(u, v) \Big\}.\label{CR}
\ee
Taking a horizontal derivation of \eqref{CR} yields
\be
{\bf L}_y(u, v, w)= {1\over 3}\Big\{{\bf J}_y(u){\bf h}_y(v, w)+{\bf J}_y(v){\bf h}_y(u, w)+{\bf J}_y(w){\bf h}_y(u, v) \Big\}.\label{LR}
\ee
Putting ${\bf J}=0$ in  \eqref{LR} implies that ${\bf L}=0$.  By Theorem \ref{MainTHM1}, we get the proof.
\end{proof}

\bigskip

\noindent
Akbar Tayebi\\
Department of Mathematics, Faculty of Science\\
University of Qom \\
Qom. Iran\\
Email:\ akbar.tayebi@gmail.com

\bigskip

\noindent
Behzad Najafi\\
Department of Mathematics and Computer Sciences\\
Amirkabir University (Tehran Polytechnic)\\
Hafez Ave.\\
Tehran. Iran\\
Email:\ behzad.najafi@aut.ac.ir


\begin{thebibliography}{99}
 \bibitem{An} P. L. Antonelli, {\it Handbook of Finsler Geometry}, Kluwer Academic Publishers, 2005.
 \bibitem{AZ} H. Akbar-Zadeh, {\it Sur les espaces de Finsler \'{a} courbures sectionnelles constantes}, Bull. Acad. Roy. Bel. Cl, Sci, 5e S\'{e}rie
 - Tome LXXXIV (1988), 281-322.
 \bibitem{BM} S.  B\'{a}cs\'{o} and M. Matsumoto, {\it On Finsler spaces of Douglas type, A generalization of notion of Berwald space}, Publ. Math.
 Debrecen. \textbf{51}(1997), 385-406.
\bibitem{Bao2} D. Bao, {\it On two curvature-driven problems in Riemann-Finsler geometry}, Adv. Stud. Pure. Math. {\bf 48}(2007),  19-71.
\bibitem{BCS} D. Bao, S. S. Chern and Z. Shen, {\it Rigidity issues on Finsler surfaces}, Rev. Roumaine Math. Pures Appl. {\bf 42}(1997), 707-735.
\bibitem{BF}  A. Bejancu and H. Farran, {\it Generalized Landsberg manifolds of scalar curvature},  Bull. Korean. Math. Soc. \textbf{37}(2000),  543-550.
\bibitem{B} L. Berwald, {\it On Cartan and Finsler Geometries, III, Two Dimensional Finsler Spaces with Rectilinear Extremal}, Ann. of Math., {\bf 42} No. 2 (1941) 84122.

\bibitem{1924}  L. Berwald, {\it \"{U}ber Parallel\"{u}bertragung in R\"{a}umen mit allgemeiner Massbestimmung},  Jber. Deutsch. Math.-Verein. {\bf 34}(1925), 213-220.
\bibitem{ChSh3} X. Cheng and Z. Shen, {\it A class of Finsler metrics with isotropic S-curvature}, Israel J. Math. {\bf 169}(2009), 317-340.
\bibitem{DH1} S. Deng and Z. Hu, {\it Homogeneous Finsler spaces of negative curvature}, J. Geom. Phys, {\bf 57}(2007), 657-664.
\bibitem{I} Y. Ichijy\={o}, {\it Finsler spaces modeled on a Minkowski space}, J. Math. Kyoto. Univ. {\bf 16}(1976), 639-652.
\bibitem{HD} Z. Hu and S. Deng, {\it Homogeneous Randers spaces with isotropic S-curvature and positive flag curvature}, Math. Z.
{\bf 270}(2012), 989-1009.
\bibitem{LR} D. Latifi and  A. Razavi, {\it On homogeneous Finsler spaces}, Rep. Math. Phys. {\bf  57}(2006), 357-366.
\bibitem{Mat96} M. Matsumoto, {\it Remarks on Berwald and Landsberg spaces}, Contemp. Math. {\bf 196}(1996), 79-82.
\bibitem{Szabo2} Z. I. Szab\'{o}, {\it All regular Landsberg metrics are Berwald}, Ann. Glob. Anal. Geom. {\bf 34}(2008), 381-386; correction, ibid,
{\bf 35}(2009), 227-230.
\bibitem{Sz} Z. I. Szab\'{o}, {\it Positive definite Berwald spaces. Structure theorems on Berwald spaces}, Tensor (N.S.), {\bf 35}(1981), 25-39.
\bibitem{TN1} A. Tayebi and B. Najafi, {\it A class of homogeneous Finsler metrics}, J. Geom. Phys, {\bf 140}(2019), 265-270.
\bibitem{TN2} A. Tayebi and B. Najafi, {\it On homogeneous isotropic Berwald metrics}, European J Math, https://doi.org/10.1007/s40879-020-00401-4.
\bibitem {Sc} R. Schneider, {\it \"{U}ber die Finsler\"{a}ume mit $S_{ijkl} =0$}, Arch. Math. {\bf 19} (1968), 656-658.
\bibitem {X} Q. Xia, {\it Some results on the non-Riemannian quantity ${\bf H}$ of a Finsler metric}, Int. J. Math. {\bf 22}(2011), 925-936.
\bibitem{XD} M. Xu and  S. Deng, {\it The Landsberg equation of a Finsler space}, Annali della Scuola Normale Superiore di Pisa, Classe di Scienze, DOI: 10.2422/2036-2145.201809$\_$015.
\bibitem {XDn} M. Xu and S. Deng, {\it Normal homogeneous Finsler spaces}, Transform. Groups. {\bf 22}(2017), 1143-1183.
\bibitem {XDr} M. Xu and S. Deng, {\it $(\alpha_1, \alpha_2)$-spaces and Clifford-Wolf homogeneity}, arXiv:1401.0472.
\end{thebibliography}
\end{document}